\documentclass[journal]{ieeetran}
\usepackage{cite}
\usepackage{graphicx}
\usepackage{epstopdf}
\usepackage{amsmath}
\usepackage{cases}
\usepackage[caption=false,font=footnotesize]{subfig}
\usepackage{fixltx2e}
\usepackage{amssymb}
\usepackage{mathtools}
\usepackage{bm}
\usepackage{multirow}
\usepackage{color}
\usepackage[dvipsnames]{xcolor}
\usepackage{algorithm}
\usepackage{algorithmic}
\usepackage{amsthm}
\usepackage{mathrsfs}
\usepackage{textcomp,mathcomp}
\usepackage{booktabs}
\usepackage{threeparttable}
\newtheorem{theorem}{Theorem}

\usepackage{pifont}
\usepackage{threeparttable}
\usepackage{float}

\newtheorem{definition}{Definition}
\newcommand{\tabincell}[2]{\renewcommand\arraystretch{0.9}\begin{tabular}{@{}#1@{}}#2\end{tabular}}
\begin{document}


\title{\LARGE{Hot Standby in Ammonia Synthesis Reshapes Market Equilibrium in Renewable P2A Systems: A Potential Game Approach}}

\author{
Yangjun~Zeng,~\IEEEmembership{Student Member,~IEEE},
Yiwei~Qiu,~\IEEEmembership{Member,~IEEE},
Xiaocong~Sun,
Jie~Zhu,~\IEEEmembership{Student Member,~IEEE},
Jiarong~Li,~\IEEEmembership{Member,~IEEE},
Shi~Chen,~\IEEEmembership{Member,~IEEE},
Buxiang~Zhou,~\IEEEmembership{Member,~IEEE},
and
Kaigui~Xie,~\IEEEmembership{Fellow,~IEEE}
%

\thanks{Y. Zeng, Y. Qiu, J. Zhu, S. Chen, B. Zhou, and K. Xie are with the College of Electrical Engineering, Sichuan University, Chengdu 610065, China. X. Sun is with the National Power Dispatching and Control Center, State Grid Corporation of China, Beijing 100031, China. J. Li is with the Harvard John A. Paulson School of Engineering and Applied Sciences, Harvard University, Cambridge 02138, USA. \emph{(Corresponding author: Yiwei Qiu)}
}%
}
\maketitle

\begin{abstract}
Integrating renewable generation, hydrogen production, and renewable ammonia (RA) synthesis into power-to-ammonia (P2A) systems creates interactions across electricity and hydrogen markets. Limited operational flexibility, however, places RA at a disadvantage at the Nash equilibrium (NE). Recent advances in ammonia synthesis reactor design enable hot standby (HSB) operation, improving flexibility but introducing integer decision variables that complicate market equilibrium analysis. To address this challenge, we develop a potential game model and derive a convergent $\epsilon$-approximate equilibrium via an iterative best-response approach. Case studies show that HSB reduces RA's reliance on hydrogen purchases and increases its profit by 20.14\%. More importantly, HSB shifts the market equilibrium toward a more mutually beneficial outcome.
\end{abstract}

\begin{IEEEkeywords}
  Renewable power to ammonia, hydrogen energy, demand response, potential game, approximate equilibrium.
\end{IEEEkeywords}

\section{Introduction}
\label{sec:intro}

\IEEEPARstart{I}{ntegrating} renewable generation (RG), hydrogen production (HP), and downstream renewable ammonia (RA) synthesis into renewable power-to-ammonia (ReP2A) systems offers a promising pathway for decarbonizing the power, transport, and chemical sectors \cite{zeng2025planning, yu2023optimal,chyong2025energy}. Because these stakeholders exchange electricity and hydrogen while pursuing competing interests, their interaction forms a noncooperative game \cite{zeng2025planning}.

This conflict is further amplified by a supply-demand mismatch. Renewable hydrogen supply is inherently variable, whereas conventional ammonia synthesis (ASY) requires continuous operation with limited load flexibility \cite{yu2023optimal}. 
Consequently, RA is disadvantaged in market interactions and attains low payoffs at the Nash equilibrium (NE), which weakens participation incentives \cite{zeng2025planning} and reduces renewable energy utilization \cite{chyong2025energy}.

Recent advances in ASY reactor design enable hot standby (HSB) operation, which improves 
flexibility and may alter the market equilibrium \cite{chengda2024ammonia, wu2025dispatchable}. However, operation-standby switching introduces binary variables and makes the game nonconvex. Conventional methods based on Karush-Kuhn-Tucker (KKT) conditions and iterative diagonalization may fail or lack convergence guarantees \cite{chen2024approaching}. Although Tohidi \textit{et al.} \cite{tohidi2016sequential} proposed an enumeration-based Moore-Bard algorithm that can identify the NE, its computational cost is prohibitive.

To address these challenges, we reformulate the problem as a potential game (PG) that captures ASY multi-state switching. An iterative best-response (BR) algorithm is developed to solve the NE with guaranteed convergence. Convex relaxation further enables an $\epsilon$-approximate NE ($\epsilon$-NE) without enumerating binary variables. Results show that HSB reshapes the market equilibrium, reduces RA’s reliance on hydrogen purchases, and improves its profitability.

\begin{figure}[t]
  \centering
  \includegraphics[width=3.52in]{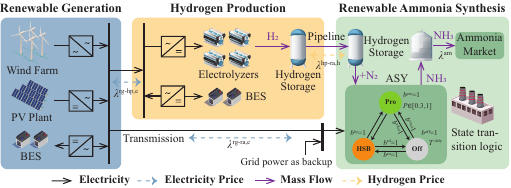}\vspace{-6pt}
  \caption{Schematic a typical multistakeholder ReP2A system.}
  \label{fig:system}
\end{figure}

\section{Game Formulation of the ReP2A System}

\subsection{Multistakeholder Structure and Transactions}
\label{sec:structure}

A typical ReP2A system is shown in Fig. 1. RG supplies electricity to HP and RA at  $P_{t}^{\text{rg,sell,hp}}$ and $P_{t}^{\text{rg,sell,ra}}$. HP produces electrolytic hydrogen and sells $f_t^{\text{hp,sell,ra}}$ to RA. RA consumes electricity and hydrogen to produce ammonia. 
Electricity and hydrogen transactions satisfy the market-clearing conditions
\begin{gather}
	P_{t}^{\text{rg,sell,hp/ra}}-P_t^{\text{hp/ra,buy,rg}}=0:\lambda_t^{\text{rg-hp/ra,e}}, \label{eq:market1}\\
	f_t^{\text{hp,sell,ra}}-f_t^{\text{ra,buy,hp}}=0:\lambda_t^{\text{hp-ra,h}}, \label{eq:market3}
\end{gather}
\noindent
where $P_t^{\text{hp/ra,buy,rg}}$ and $\lambda_t^{\text{rg-hp/ra,e}}$ denote renewable electricity purchases by HP/RA and prices; and $f_t^{\text{ra,buy,hp}}$ and $\lambda_t^{\text{hp-ra,h}}$ denote hydrogen purchase by RA and the price.

\subsection{Multi-State Switching of the ASY}
\label{sec:multistate}

Unlike conventional ASY designed for continuous operation, advances in reactor structure, insulation, and process flowsheets enable HSB to handle intermittent hydrogen supply \cite{chengda2024ammonia}.

With the HSB ability, ASY operates in three states as shown in Fig. \ref{fig:system}: \emph{Production} $b_{t}^{\text{pro}}$, \emph{HSB} $b_{t}^{\text{by}}$, and \emph{Idle} $b_{t}^{\text{off}}$. In \emph{Production}, the load ranges from $\underline{\eta}^{\text{asy}}=30\%$ to $\overline{\eta}^{\text{asy}}=100\%$ \cite{yu2023optimal,zeng2025planning,wu2025dispatchable}. In \emph{HSB}, 
a small amount of electricity maintains reactor temperature and pressure for fast restart. Switching from \emph{Idle} to \emph{HSB} or \emph{Production} requires several hours. The three states satisfies (\ref{eq:logic}) with \emph{Startup} $b_{t}^{\text{su}}$ and \emph{Shutdown} $b_{t}^{\text{sd}}$ actions defined in (\ref{eq:start})--(\ref{eq:shut}), and a minimal downtime $T^{\text{dely}}$ enforced by (\ref{eq:dely}).
\begin{gather}
	b_{t}^{\text{pro}}+b_{t}^{\text{by}}+b_{t}^{\text{off}}=1, \label{eq:logic}\\
	b_{t}^{\text{pro}}+b_{t}^{\text{by}}+b_{t-1}^{\text{off}}-1\leq b_{t}^{\text{su}}, \label{eq:start}\\
	b_{t-1}^{\text{pro}}+b_{t-1}^{\text{by}}+b_{t}^{\text{off}}-1\le b_{t}^{\text{sd}}, \label{eq:shut}\\
	 \sum\nolimits_{\tau=t}^{t+T^{\text{dely}}-1} b_{\tau}^{\text{off}}\geq T^{\text{dely}}(b_{t}^{\text{off}}-b_{t-1}^{\text{off}}). \label{eq:dely}
\end{gather}

\subsection{Mathematical Modeling of Operational Decisions}

Each stakeholder $k \in \{\text{rg,hp,ra}\}$ determines its operation and transactions to minimize cost $C_k$. Their operational models combine our prior work \cite{zeng2025planning} and (\ref{eq:logic})--(\ref{eq:dely}), summarized as
\begin{align}
	& (\text{RG})~~\min~\sum\nolimits_{t=1}^T\big(-P_{t}^{\text{rg,sell,hp}}\lambda_t^{\text{rg-hp,e}}-P_{t}^{\text{rg,sell,ra}}\lambda_t^{\text{rg-ra,e}} \hspace{12pt} \nonumber 
\end{align}

~
\vspace{-17.5pt}
\begin{align}	
	&~~~~~~~~~~~~~~~+\sigma^{\text{deg}}P_{t}^{\text{rg,bes,d}}\big)\Delta t, \label{eq:objrg} \\
	&\text{s.t.~~battery energy storage operation: (2)--(6) in \cite{zeng2025planning}}, \\
	& \text{~~~~~electrical network power flow: (7)--(16) in \cite{zeng2025planning}}. \\
	& (\text{HP})~~\min~\nonumber  \sum\nolimits_{t=1}^T \big(P_t^{\text{hp,buy,rg}}\lambda_t^{\text{rg-hp,e}}+\sigma^{\text{deg}}P_{t}^{\text{hp,bes,d}}  \\
	&~~~~~~~~~~~~~~~-f_t^{\text{hp,sell,ra}}\lambda_t^{\text{hp-ra,h}}\big)\Delta t \label{eq:objhp}\\
	&\text{s.t.~~battery energy storage operation}, \\
	&\text{~~~~~hydrogen production (19)--(22),  storage (23)--(26), } \nonumber\\
	&\text{~~~~ and delivery (27)--(33) in \cite{zeng2025planning}}, \\
	& (\text{RA})~~ \min~\nonumber \sum_{t=1}\nolimits^T \big[(f_t^{\text{ra,buy,hp}}\lambda_t^{\text{hp-ra,h}}+P_t^{\text{ra,buy,rg}}\lambda_t^{\text{rg-ra,e}}\\
	&~~~~~~~~~~~~~~~+P_t^{\text{ra,back}}\lambda^{\text{ra,back}}-M_t^{\text{ra,sell}} \lambda_t^{\text{am}})\Delta t+ c^{\text{su}} b_{t}^{\text{su}} \big] \label{eq:objra}\\
	&\text{s.t.~~hydrogen storage operation}, \\
	&\text{~~~~~ASY operation: (36)--(37), (41) in \cite{zeng2025planning} and (\ref{eq:logic})--(\ref{eq:dely})}, \label{eq:asyop}\\
	&~~~~~P_{t}^{\text{ra,back}}+P_{t}^{\text{ra,buy,rg}}=P_{t}^{\text{ra,asy}}+b_{t}^{\text{by}} P^{\text{ra,by}},  \label{eq:pas} \\
	&~~~~~b_{t}^{\text{pro}} \underline{\eta}^{\text{asy}}W^{\text{ra,asy}}\leq M_{t}^{\text{ra,pro}}\leq b_{t}^{\text{pro}} \overline{\eta}^{\text{asy}}W^{\text{ra,asy}}, \label{eq:aslimit}\\
	&~~~~~|M_{t}^{\text{ra,pro}}-M_{t-1}^{\text{ra,pro}}|\leq\overline{r}^{\text{asy}}W^{\text{ra,asy}}, \label{eq:asramp}    \\
	&\text{~~~~~ammonia storage operation: (42)--(45) in \cite{zeng2025planning} },
\end{align}
\noindent
where $T$ and $\Delta t$ are the operational horizon and step length;
$P_{t}^{\text{rg/hp,bes,d}}$ and $\sigma^{\text{deg}}$ are battery charging/discharging power and degradation cost;
$c^{\text{su}}$ is the ASY startup cost;
$P_{t}^{\text{ra,back}}$ and $\lambda^{\text{ra,back}}$ are backup power and cost;
$M_{t}^{\text{ra,pro}}$, $M_t^{\text{ra,sell}}$, and  $\lambda_t^{\text{am}}$ are ammonia production, sales, and price;
$P^{\text{ra,by}}$ is the HSB power; $W^{\text{ra,asy}}$ and $\overline{r}^{\text{asy}}$ are the ASY capacity and ramping limit.

Let ${x}_k$ denote the decision variables of stakeholder $k$, and ${x}_{-k}$ those of others. The feasible strategy set of stakeholder $k$ is denoted by $X_k({x}_{-k})$. The market-clearing conditions are denoted by $\varphi({x}_k,{x}_{-k})$. The multistakeholder game of the ReP2A system is thus formulated as:
\begin{align}
	(G) \ \min_{{x}_k}~C_k,~~\text{s.t. } {x}_k \in X_k({x}_{-k}),~ \forall k. \label{eq:game}
\end{align}

Due to binary variables, $G$ is nonconvex, and its NE is difficult to compute \cite{chen2024approaching}.

\section{Solution Method for the Equilibrium}

\subsection{Construction of Potential Game}

To compute the NE, we reformulate the game $G$ as a PG and solve it via iterative BR, which ensures convergence \cite{cenedese2019charging}.

\begin{definition}
	A finite game is a PG if there exists a potential function $\Phi(x)$ such that, for any stakeholder $k$, any strategies $x_k, x_k' \in X_k$, and fixed $x_{-k}$, we have
	\begin{align}
		\hspace{-3pt}\Phi(x_k, x_{-k})-\Phi(x_k', x_{-k})=J_k(x_k, x_{-k})-J_k(x_k', x_{-k}). \hspace{-5pt}
	\end{align}
\end{definition}

Here, we construct $J_k$ and $\Phi$ as 
\begin{gather}
	J_k= C_k+\frac{\rho}{2} \Vert \varphi({x}_k,{x}_{-k})\Vert_2^2, \\
	\Phi =\sum \nolimits_k C_k+\frac{\rho}{2} \Vert \varphi({x}_k,{x}_{-k})\Vert_2^2,
\end{gather}
\noindent where $\rho$ is a penalty factor. The quadratic penalty on $\varphi$ (with its linear term included in $C_k$) enforces the market-clearing conditions (\ref{eq:market1})--(\ref{eq:market3}) within an ADMM framework and stabilizes price updates \cite{he2026fully}. At equilibrium, $J_k$ equals $C_k$.

In a PG, the potential function is nonincreasing under iterative BR, with each stakeholder $k$ acting following $BR_k({x}_{-k}):={\arg\min}_{{x}_k \in X_k({x}_{-k})} \ J_k$, which ensures convergence \cite{cenedese2019charging}.



\subsection{$\epsilon$-Approximate NE and Iterative BR Solution Algorithm}

Although convergence is ensured in PG, binary variables in the ASY operation (\ref{eq:asyop})--(\ref{eq:asramp}) increase BR computation cost.
To improve tractability, we relax the binary variables to $[0,1]$ at each iteration, yielding a relaxed strategy set $X_{r,k}$. The relaxed solution is then rounded to recover a feasible strategy, serving as an approximate BR with error 
\begin{align}
	\epsilon^i_k=J_{k}(x^{i+1}_k, x^{i}_{-k})-\inf_{x_k \in X_k(x^i_{-k})}J_k(x_k, x^{i}_{-k})\leq \overline{\epsilon}.
\end{align}
\begin{definition}
	A strategy profile $\{\hat{x}_k, \hat{x}_{-k}\}$ is an $\epsilon$-NE if
	\begin{align}
		J_k(\hat{x}_k, \hat{x}_{-k})\leq \inf_{{x}_k \in X_k(x_{-k})}J_k({x}_k, \hat{x}_{-k})+\epsilon,~\forall k. 
	\end{align}
\end{definition}
The solution quality is measured by
\begin{align}
	d^{i}_k:=J_{k}(x^{i}_k, x^{i}_{-k})-\inf_{x_k \in X_k(x^i_{-k})}J_k(x_k, x^{i}_{-k}) \geq 0.
\end{align}
\noindent 
where smaller $d^{i}_k$ indicates a solution closer to the NE. Theorem \ref{theorem:1} ensures convergence of the relaxed BR process.

\begin{algorithm}[t]
	\small	
	\caption{ \small Iterative BR Algorithm for solving the NE}
	\label{alg:BR}
		\begin{algorithmic}[1]
		\STATE \textbf{precondition:} Initialize maximum iteration $i^m$, price $\lambda^0$, and penalty $\rho$, and tolerance $g_1$ and $g_2$ \label{line:1}
		\STATE  Solve (\ref{eq:game}) for all $k$ to obtain initial strategies $\{x_k^0,x_{-k}^0\}$ \label{line:3}
		\STATE Set iteration index $i\gets0$, and compute $\Phi^0$ and $\varphi^0$
		\FOR {$i=0$ \TO $i^m-1$}
		\STATE Update RG's BR ${x}^{i+1}_\text{rg} \gets {\arg\min}_{{x}_\text{rg} \in X_{\text{rg}}({x}^{i}_\text{hp},{x}^{i}_\text{ra})} \ J_\text{rg}$
		\STATE  Update HP's BR ${x}^{i+1}_\text{hp} \gets {\arg\min}_{{x}_\text{hp} \in X_{\text{hp}}({x}^{i+1}_\text{rg},{x}^{i}_\text{ra})} \ J_\text{hp}$
		\STATE Solve relaxed $\tilde{x}^{i+1}_\text{ra} \gets {\arg\min}_{{x}_\text{ra} \in X_{r,\text{ra}}({x}^{i+1}_\text{rg},{x}^{i+1}_\text{hp})} \ J_\text{ra}$  \label{line:7}
		\STATE Get RA's approximate BR ${x}^{i+1}_\text{ra} \gets \tilde{x}^{i+1}_\text{ra}|_{b=\lfloor \tilde{b}^{i+1}\rceil}$ \label{line:8}
		\STATE Compute $\Phi^{i+1}$ and $\varphi^{i+1}$
		\STATE Update $\lambda^{i+1} \gets \lambda^{i}+\rho \varphi^{i+1}$
		\IF{$|\Phi^{i+1}-\Phi^{i}|<g_1$ and $\Vert\varphi^{i+1}\Vert_2<g_2$}
		\STATE \textbf{break}
		\ENDIF
		\ENDFOR
		\RETURN $\{\lambda^{i+1},{x}^{i+1}_{k,\forall k}\}$
	\end{algorithmic}
\end{algorithm}

\begin{theorem}
	\label{theorem:1}
In the PG, $\limsup_{i\to \infty} \max_{k\in \text{\{rg,hp,ra\}}}d^{i}_k \leq \overline{\epsilon}$, and iterates asymptotically lie in the $\overline{\epsilon}$-NE set.
\end{theorem}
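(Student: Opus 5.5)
The argument is the standard descent argument for potential games under inexact best responses. I will use the potential $\Phi$ as a Lyapunov function along the Gauss--Seidel sweep of Algorithm~\ref{alg:BR}. I treat $\rho$ and the prices as fixed within the sweep, so that Definition~1 holds for $J_k$ and $\Phi$ as constructed. I also assume that $\Phi$ is bounded below on the feasible set. This holds because the strategy sets are compact (bounded capacities, storage, and binaries) and $C_k$ is continuous.

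\emph{Step 1 (per-player decrease).} Within iteration $i$, write $y^{i,k}$ for the profile just before player $k$ updates. The profile $y^{i,k}$ agrees with $x^i$ for the players not yet updated and with $x^{i+1}$ for those already updated. For RG and HP the update is an exact best response, so $J_k$ weakly decreases. For RA, the rounded update satisfies $J_{\text{ra}}(x^{i+1}_{\text{ra}},\cdot)\le \inf_{x_{\text{ra}}} J_{\text{ra}}(x_{\text{ra}},\cdot)+\epsilon^i_{\text{ra}}$ with $\epsilon^i_{\text{ra}}\le\overline\epsilon$. By the potential identity, the change in $\Phi$ caused by player $k$ equals the change in $J_k$. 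This gives
\begin{align}
\Phi(y^{i,k+1})-\Phi(y^{i,k}) \le -d_k(y^{i,k}) + \epsilon^i_k ,
\end{align}
where $d_k(y):=J_k(y)-\inf_{x_k}J_k(x_k,y_{-k})\ge 0$ is the best-response gap evaluated at the current profile.

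\emph{Step 2 (telescoping).} Summing over players and over iterations $i=0,\dots,N-1$ gives
\begin{align}
\sum_{i<N}\sum_k d_k(y^{i,k}) \le \Phi(x^0)-\inf\Phi+\sum_{i<N}\sum_k\epsilon^i_k .
\end{align}
The remaining task is to show $\limsup_i\max_k d^i_k\le\overline\epsilon$. Boundedness of $\Phi$ alone cannot force $d\to0$, since $\overline\epsilon$ may be spent in every round. I therefore argue by contradiction. Suppose $\max_k d^i_k>\overline\epsilon+\delta$ infinitely often. At each such iteration, player $k$'s exact improvement exceeds $\overline\epsilon+\delta$. Its approximate response is within $\overline\epsilon$ of the infimum, so $\Phi$ strictly decreases by at least $\delta$ in that step. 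The other steps raise $\Phi$ by at most nothing for exact responders, and only RA rounding can raise $J_{\text{ra}}$ above the previous value. I would strengthen the rounding rule to "accept the rounded point only if it does not increase $J_{\text{ra}}$, otherwise keep $x^i_{\text{ra}}$". This preserves the $\overline\epsilon$ bound, because $x^i_{\text{ra}}$ is feasible. It also makes $\Phi$ monotone nonincreasing, so infinitely many decreases of size $\delta$ would push $\Phi\to-\infty$, which contradicts boundedness.

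\emph{Step 3 (from sequential to simultaneous gaps).} Step 2 controls gaps evaluated at the intermediate profiles $y^{i,k}$, whereas $d^i_k$ is defined at $x^i$. To bridge this, I use that $\Phi^i$ converges (it is monotone and bounded), so the per-step decreases vanish asymptotically. I then use continuity of $J_k$ together with compactness of the strategy sets: in the continuous variables, the intermediate profiles become close to $x^i$ along convergent subsequences. For the finitely many binary patterns, eventual stabilization follows from finiteness, since a change of pattern that lowers $\Phi$ by a fixed positive amount can occur only finitely often. Continuity of $x_{-k}\mapsto\inf_{x_k\in X_k(x_{-k})}J_k$ is obtained via Berge's maximum theorem, assuming $X_k(\cdot)$ is a continuous correspondence (linear constraints with Slater points). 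Taken together, these give $\limsup_i d^i_k\le\overline\epsilon$ for every $k$. The inequality $d^i_k\le\overline\epsilon$ is exactly the condition in Definition~2, so every limit point lies in the $\overline\epsilon$-NE set, which is closed by continuity.

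\emph{Main obstacle.} The hardest part is Step 3. The algorithm updates prices $\lambda$ between sweeps, and the correspondences $X_k(x_{-k})$ are coupled. Both of these break the exact potential identity, which strictly holds only for fixed $\lambda$. I would first try to handle $\lambda$ by treating the iteration in the regime where $\varphi^i\to0$, that is, where the stopping test is met and $\lambda$ has stabilized. In that regime the price updates are summable perturbations of $\Phi$, and the monotonicity argument survives with an $o(1)$ error term.
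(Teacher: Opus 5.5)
Your Steps 1--2 are the paper's argument, made more careful. The paper's proof consists only of the identity that a player's update lowers $\Phi$ by $d^i_k-\epsilon^i_k$, together with the assertion that $\Phi$ is monotone, so the decrements vanish. You correctly add that $\Phi$ is bounded below. You also correctly notice that monotonicity is not automatic: a rounded RA step with $d<\epsilon$ raises $\Phi$. Your keep-the-old-strategy rule fixes this without breaking the $\overline\epsilon$ bound. The paper also writes every deviation against $x^i_{-k}$ and treats $\Phi(x^i_k,x^i_{-k})-\Phi(x^{i+1}_k,x^i_{-k})$ as a step of the monotone sequence. That is literally true only for the first mover, so the paper never confronts the sequential-versus-simultaneous issue you raise.

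The gap is in Step 3, which tries to resolve that issue.
\begin{itemize}
\item Vanishing decrements of $\Phi$ do not make $y^{i,k}$ close to $x^i$. At fixed prices every $C_k$ is linear, and the penalty is quadratic only in the traded quantities. Best responses are therefore generally non-unique in internal variables such as battery dispatch, flows, and storage, and an exact solver may jump across an optimal face with no change in $J_k$.
\item Likewise, an RA binary-pattern change need not lower $\Phi$ by any fixed amount. The continuous variables re-adjust, and your rule even accepts zero-gain switches, so the stabilization argument fails.
\end{itemize}

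Neither claim is needed; handle the three players separately.
\begin{itemize}
\item RG moves first, so $y^{i,\text{rg}}=x^i$. Its exact step lowers $\Phi$ by exactly $d^i_{\text{rg}}$, which therefore tends to $0$.
\item RA moves last, so $d^i_{\text{ra}}\le\overline\epsilon$ at every iterate by construction.
\item HP's exact step lowers $\Phi$ by $d_{\text{hp}}(y^{i,\text{hp}})\to 0$. The quantity $d^i_{\text{hp}}$ differs from this only through RG's sales vector $s$.
\end{itemize}

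For HP, $d_{\text{hp}}$ is Lipschitz in $s$ on the bounded feasible set, provided $X_{\text{hp}}$ does not depend on $x_{\text{rg}}$. The remaining task is to show that $s$ moves by vanishing amounts. $J_{\text{rg}}$ is linear plus $\frac{\rho}{2}\|s-c\|_2^2$, with $c$ fixed by the others' purchases. RG's feasible set is convex, since the paper puts binaries only in the ASY. Hence the exact BR satisfies $J_{\text{rg}}(x^i_{\text{rg}},x^i_{-\text{rg}})-J_{\text{rg}}(x^{i+1}_{\text{rg}},x^i_{-\text{rg}})\ge\frac{\rho}{2}\|s^i-s^{i+1}\|_2^2$. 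The left side tends to $0$, so $s$ moves by vanishing amounts and $d^i_{\text{hp}}\to0$. This strong convexity from the penalty replaces your subsequence, Berge, and stabilization reasoning.

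Finally, drop the claim that price updates are summable perturbations. A dual step shifts $\Phi$ by a term of order $\rho\|\varphi^{i+1}\|_2^2$, and nothing establishes that these terms are summable. The theorem concerns the PG at fixed prices, which is also how the paper treats it.
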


\begin{proof}
	From the properties of PGs, we have
	\begin{align}
		&\nonumber \Phi(x^{i}_k, x^{i}_{-k})-\Phi(x^{i+1}_k, x^{i}_{-k})=J_{k}(x^{i}_k, x^{i}_{-k})-J_{k}(x^{i+1}_k, x^{i}_{-k})\\
		&\nonumber =J_{k}(x^{i}_k, x_{-k})-\inf_{x_k}J_k(x_k, x^{i}_{-k}) +\inf_{x_k}J_k(x_k, x^{i}_{-k}) \\
		&~~~~- J_{k}(x^{i+1}_k, x_{-k}) = d^{i}_k-\epsilon^i_k \geq  d^{i}_k-\overline{\epsilon}. \label{eq:proof}
	\end{align}
	Since $\Phi$ is monotonic, $\limsup_{i\to \infty}\ (\ref{eq:proof}) = 0$.
\end{proof}

By controlling $\epsilon^i_k$, the algorithm converges to a high-quality $\epsilon$-NE. 
The procedure is summarized in Algorithm \ref{alg:BR}. 
The approximate BR is accepted if $J_\text{ra}({x}^{i+1}_\text{ra})-J_\text{ra}(\tilde{x}^{i+1}_\text{ra})\leq \overline{\epsilon}$; otherwise, the exact BR is solved to ensure convergence.

\section{Case Study}
\label{sec:cases}

Based on a real-life  ReP2A project in Inner Mongolia, China \cite{yu2023optimal, zeng2025planning}, we compare three settings: \textbf{M1}, where ASY has no HSB capability \cite{zeng2025planning}; \textbf{M2}, where ASY has HSB capability; and \textbf{M3}, a cooperative benchmark in which ASY has HSB capability and inter-stakeholder decisions are coordinated to represent the global optimum. System parameters are taken from \cite{zeng2025planning} and \cite{zeng2026carbon}. The 7-day wind and solar profiles for two scenarios, together with the ammonia price, are shown in Fig. \ref{fig:WT}.

\begin{figure}[t]
	\centering
	\includegraphics[width=3.35in]{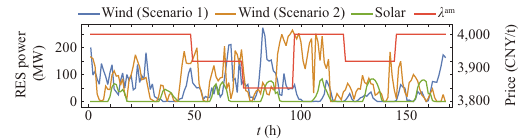}\vspace{-8pt}
	\caption{Scenarios of wind and solar power generation, and ammonia price.}
	\label{fig:WT}\vspace{5pt}
	\includegraphics[width=3.35in]{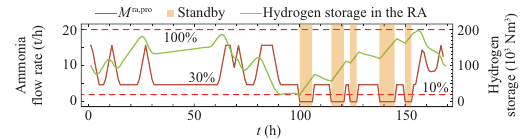}\vspace{-8pt}
	\caption{Ammonia yield and hydrogen storage in the RA (Scenario 1, M2).}
	\label{fig:ASY}\vspace{5pt}
	\includegraphics[width=3.35in]{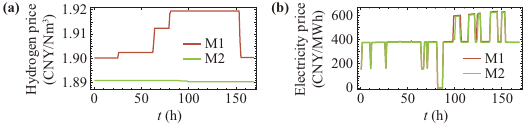}\vspace{-8pt}
	\caption{(a) Hydrogen, and (b) electricity prices in Scenario 1.}
	\label{fig:price}\vspace{5pt}
	\includegraphics[width=3.35in]{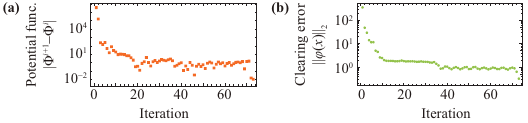}\vspace{-8pt}
	\caption{Convergence of the iterative BR solution algorithm. (a) Incremental potential function. (b) Norm of market clearing residual.}
	\label{fig:convergence}
\end{figure}

In Scenario 1, where renewable energy sources (RESs) are scarce, Fig. \ref{fig:ASY} shows RA operation under M2, and Fig. \ref{fig:price} compares electricity and hydrogen prices under M1 and M2. Results are summarized in Table \ref{tab:comparison}. Under M1, RA incurs a loss, confirming its weak position and lack of participation incentives in the noncooperative game \cite{zeng2025planning}. With HSB enabled in M2, RA  switches to standby during low supply, such as $t\in[100,106]$ h, reducing hydrogen purchases and lowering hydrogen prices. RA therefore becomes profitable. This demonstrates that HSB improves RA's resilience under weak RES supply and restores participation incentives.

In Scenario 2, with a stronger renewable supply, HSB is used less frequently, and the profit gain is smaller,
indicating that HSB's value is greater under limited or volatile conditions. Further comparison between M2 and the cooperative benchmark M3 shows that the $\overline{\epsilon}$-NE closely approximates the global optimum. This not only validates the solution method but also shows that HSB moves the noncooperative game among RG, HP, and RA toward a more mutually beneficial outcome.

Table \ref{tab:comparison2} further evaluates the effect of HSB over 12 typical weeks. The average RA profit increases from 1.44 $\times 10^6$ CNY under M1 to 1.73 $\times 10^6$ CNY under M2, a gain of 20.14\%. The profits of RG and HP change only slightly, indicating that the main benefit of HSB is to strengthen RA's economic position rather than to change the system symmetrically.

In terms of computation, $\overline{\epsilon}$ is set to 0.01\% of the potential function $\Phi$. Fig. \ref{fig:convergence} shows convergence of the iterative BR algorithm. All computations are performed on a laptop with an \emph{Intel Core Ultra 7 165H@3.80 GHz} CPU and 32 GB of RAM, and convergence is reached within 9 min.

\begin{table}[t]\scriptsize
	\renewcommand{\arraystretch}{1.2}
	\caption{Operational Comparison under Different Settings}\vspace{-7pt}
	\label{tab:comparison}
	\centering
	\begin{tabular}{c@{\hspace{4pt}}c@{\hspace{4pt}}c@{\hspace{4pt}}c@{\hspace{4pt}}c}
		\toprule
		\tabincell{c}{Settings}    & \tabincell{c}{Revenue \{RG, HP, RA\}\\(10$^4$ CNY)}    & \tabincell{c}{Total revenue\\(10$^4$ CNY)}  & \tabincell{c}{ Average price\\ \{$\lambda^{\text{rg-hp/ra,e}}$, $\lambda^{\text{hp-ra,h}}$\}\\(CNY/MWh, CNY/Nm$^3$)}  \\
		\midrule
		M1 (Scenario 1)       &  \{345.12, 55.97, -2.41\}            & 398.68  & \{3.244, 1.909\}        \\
		M2 (Scenario 1)       &  \{341.26, 54.44, \textbf{3.33}\}    & 399.01  &\{3.199, 1.891\}        \\
		M3 (Scenario 1)       &        /                             & 400.30  &/        \\
		\hline
		M1 (Scenario 2)       &  \{370.33, 180.94, 7.67\}     		 &  558.94 &  \{2.502, 1.901\}           \\
		M2 (Scenario 2)       &  \{369.12, 181.35, \textbf{8.69}\}   & 559.16  &  \{2.497, 1.899\}  \\
		M3 (Scenario 2)       &        /                             & 559.65  &/        \\
		\bottomrule
	\end{tabular}
\end{table}

\begin{table}[t]\scriptsize
	\renewcommand{\arraystretch}{1.2}
	\caption{Revenue Comparison Between M1 and M2 over 12 Typical Weeks}\vspace{-7pt}
	\label{tab:comparison2}
	\centering
	\begin{tabular}{c@{\hspace{5pt}}c@{\hspace{5pt}}c@{\hspace{5pt}}c@{\hspace{5pt}}c}
		\toprule
		Settings    & RG (10$^6$ CNY)    &HP (10$^6$ CNY)   & RA (10$^6$ CNY)    \\
		\midrule
		M1 (w/o HSB)    	&  39.43  	&  31.51    &   1.44      \\
		M2 (w/ HSB)         &  39.34  	& 31.35  & \textbf{1.73 (+20.14\%)}     \\
		\bottomrule
	\end{tabular}
\end{table}



\section{Conclusion}
\label{sec:conclusion}

This letter examines how  ASY's HSB capability reshapes the equilibrium of the noncooperative ReP2A system. To address the nonconvexity introduced by production-standby switching, a potential-game approach is developed, and an $\epsilon$-approximate equilibrium is obtained via an iterative BR scheme. Main findings include:

1) HSB improves the flexibility of RA, reduces its dependence on hydrogen purchase, and increases its profitability by 20.14\%;

2) The benefit is most pronounced when the renewable supply is limited or volatile;

3) More interestingly, HSB helps the interaction among RG, HP, and RA move closer to a mutually beneficial outcome, strengthening incentives for ReP2A deployment.


Future work may focus on market and incentive design. Extension to dynamic and multi-timescale games that include RG, HP, RA, and ammonia consumers also merits study.

\end{document}